\newtheorem{thm}{Theorem}[section]
\newtheorem{prop}[thm]{Proposition}
\newtheorem{cor}[thm]{Corollary}
\newtheorem{lem}[thm]{Lemma}
\newtheorem{defn}[thm]{Definition}
\newtheorem{remark}[thm]{Remark}
\newtheorem{example}[thm]{Example}
\newcommand{\R}{\mathbb{R}}
\newcommand{\C}{\mathbb{C}}
\newcommand{\Pe}{\mathbb{P}}
      \def\@setcopyright{}
      \def\serieslogo@{}
\begin{document}
\author{Giorgio Ottaviani, Alicia Tocino}
\address{Dipartimento di Matematica e Informatica ``Ulisse Dini'', University of Florence, Italy}
\email{ottavian@math.unifi.it, aliciatocinosanchez@ucm.es}

\title{\textbf{Best rank k approximation for binary forms}}

\begin{abstract}
In the tensor space $\mathrm{Sym}^d \R^2$ of binary forms we study the best rank $k$ approximation problem. The critical points of the best rank $1$ approximation problem are the eigenvectors and it is known that they span a hyperplane.
We prove that the critical points of the best rank $k$ approximation problem lie in the same hyperplane.
\end{abstract}

\maketitle

\section{Introduction}
The symmetric tensor space $\mathrm{Sym}^dV$, with $V=\R^2$ (resp. $V=\C^2$), contains real (resp. complex) binary forms,
which are homogeneous polynomials in two variables. The forms which can be written as $v^d$, with $v\in V$, correspond to polynomials which are the $d$-power of a linear form,
they have rank one.
We denote by $C_d\subset \mathrm{Sym}^dV$ the variety of forms of rank one.
The $k$-secant variety $\sigma_k(C_d)$ is the closure of the set of forms which can be written as $\sum_{i=1}^k\lambda_iv_i^d$ with $\lambda_i\in\R$ (resp. $\lambda_i\in\C$).

 We say that a nonzero rank $1$ tensor is a critical rank one tensor for $f\in \mathrm{Sym}^d V$ if it is a critical point of the distance function from $f$ to the variety of rank $1$ tensors. 
Critical rank one tensors are important to determine the best rank one approximation of $f$, in the setting of optimization
\cite{FriTam, Lim, Stu}. Critical rank one tensors may be written as $\lambda v^d$ with $\lambda\in\C$ and $v\cdot v=1$, the last scalar product is the Euclidean scalar product.
The corresponding vector $v\in V$ has been called tensor eigenvector, independently by Lim and Qi, \cite{Lim, Qi}. In this paper we concentrate on
critical rank one tensors $\lambda v^d$, which live in $\mathrm{Sym}^dV$ (not in $V$ like the eigenvectors), for a better comparison with critical rank $k$ tensors, see Definition \ref{def:kcritical} .

There are exactly $d$ critical rank one tensor (counting with multiplicities)
for any $f$ different from $c(x^2+y^2)^{d/2}$ (with $d$ even), while 
there are  infinitely many critical rank one tensors for $f=(x^2+y^2)^{d/2}$ (see Prop. \ref{prop:eigendisc}).

The critical rank one tensors for $f$ are contained in the hyperplane $H_f$
(called the singular space, see \cite{OP}), which is orthogonal to the vector $D(f)=yf_x-xf_y$. We review this statement at the beginning of \S \ref{sec:singularspace}.

The main result of this paper is the following extension of the previous statement to critical rank $k$ tensors, for any $k\ge 1$. 

\begin{thm}\label{thm:main} Let $f\in \mathrm{Sym}^d\C^2$ .\hfill

i)  All critical rank $k$ tensors for $f$ are contained in the hyperplane $H_f$, for any $k\ge 1$.

ii) Any  critical rank $k$ tensor for $f$  may be written as a linear combination of the  critical rank $1$ tensors for $f$.

\end{thm}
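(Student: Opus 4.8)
The plan is to prove (i) without decomposing $g$ at all, by exploiting the invariance of $\sigma_k(C_d)$ under rotations and the skew-adjointness of the operator $D$, and then to deduce (ii) from (i) together with the known rank-one span statement. Throughout I would use the Euclidean scalar product $\langle\,,\,\rangle$ on $\mathrm{Sym}^d\C^2$ determined by $\langle v^d,w^d\rangle=(v\cdot w)^d$ (equivalently, the monomials $x^{d-i}y^i$ are orthogonal with $\|x^{d-i}y^i\|^2=1/\binom di$), so that $\langle v^d,h\rangle=h(v)$ for every $h\in\mathrm{Sym}^d\C^2$ and $H_f=(D(f))^{\perp}$ with $D(f)=yf_x-xf_y$. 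Two elementary facts are needed. (a) The operator $D=y\partial_x-x\partial_y$ is the infinitesimal generator of the action of $SO(2,\C)$ on the variables $x,y$, and that action preserves $v\cdot w$, hence the scalar product; therefore $D$ is skew-adjoint, $\langle D(p),q\rangle=-\langle p,D(q)\rangle$, and in particular $\langle p,D(p)\rangle=0$ for every $p$. (This can also be checked directly on monomials from $D(x^{d-i}y^i)=(d-i)x^{d-i-1}y^{i+1}-i\,x^{d-i+1}y^{i-1}$ using $\binom di/\binom d{i+1}=(i+1)/(d-i)$.) (b) Since $C_d$ is the closure of the $GL_2(\C)$-orbit of a rank-one tensor and secant varieties are stable under linear changes of coordinates, $\sigma_k(C_d)$ is $GL_2(\C)$-invariant; in particular it is stable under the rotation subgroup, so $D(g)$ — the velocity at $g$ of the rotation orbit through $g$ — lies in the tangent space $T_g\sigma_k(C_d)$ for every $g\in\sigma_k(C_d)$.

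For (i): if $g$ is a critical rank $k$ tensor for $f$, i.e.\ a point of $\sigma_k(C_d)$ at which $f-g\perp T_g\sigma_k(C_d)$, then by (b) one gets $\langle f-g,D(g)\rangle=0$; since $\langle g,D(g)\rangle=0$ by (a) this becomes $\langle f,D(g)\rangle=0$, and applying skew-adjointness of $D$ once more, $\langle D(f),g\rangle=-\langle f,D(g)\rangle=0$. Hence $g\in H_f$. This argument is uniform in $k\ge1$ and makes no reference to a decomposition $g=\sum_i\lambda_iv_i^d$ or to the number of summands.

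For (ii): by (i) every critical rank $k$ tensor lies in $H_f$, so it suffices to know that the critical rank $1$ tensors of $f$ span $H_f$. If $D(f)\ne0$ this is the rank-one statement recalled at the start of \S\ref{sec:singularspace} (when $D(f)$ has $d$ distinct roots, the $d$ critical rank $1$ tensors are powers of pairwise non-proportional linear forms, hence linearly independent and thus a basis of the $d$-dimensional $H_f$; in general one uses that, counted with multiplicity, they form a length-$d$ subscheme of $C_d$, which spans $H_f$ because any length-$d$ subscheme of $\Pe^1$ imposes independent conditions on forms of degree $d$). If $D(f)=0$, i.e.\ $f=c(x^2+y^2)^{d/2}$, then $H_f=\mathrm{Sym}^d\C^2$ and the critical rank $1$ tensors $c\,v^d$ with $v\cdot v=1$ already span it. In every case $g\in H_f=\langle\text{critical rank }1\text{ tensors of }f\rangle$, which is (ii).

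The substantive content is the reduction in (i), and the only point requiring care there is checking that Definition \ref{def:kcritical} is indeed the orthogonality condition $f-g\perp T_g\sigma_k(C_d)$ at a point where this tangent space contains the rotation direction $D(g)$ — which it does at every smooth point, while at points of the singular locus $\mathrm{Sing}\,\sigma_k(C_d)=\sigma_{k-1}(C_d)$ the identical computation goes through with the Zariski tangent space (which still contains $D(g)$), or one argues by induction on $k$. The hard part is thus not in the present theorem: for (ii) the genuinely non-formal ingredient is the rank-one span statement itself, taken here from \S\ref{sec:singularspace}, whose proof for special $f$ (where the eigendiscriminant vanishes and the distinct critical rank $1$ tensors no longer suffice) is where the real work behind (ii) resides. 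I do not expect any serious obstacle beyond these bookkeeping points.
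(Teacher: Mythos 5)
Your proof is correct, and part (i) takes a genuinely different route from the paper. The paper proves (i) by combining Proposition \ref{prop:critical} (Terracini plus the normal space of $C_d$, which requires $2k\le d$ and an explicit decomposition $g=\sum_i\mu_il_i^d$) with the Leibniz-rule computation of $D(f)$ in Theorem \ref{mainTheorem}, killing the three resulting summands against $\sum_j l_j^d$ via Lemma \ref{lem:lmperp} and divisibility by $\prod(l_i^\perp)^2$. You instead observe that $D$ is the infinitesimal generator of the $SO(2)$-action, which preserves both the Bombieri scalar product (so $D$ is skew-adjoint, $\langle g,D(g)\rangle=0$) and the variety $\sigma_kC_d$ (so $D(g)\in T_g\sigma_kC_d$), whence $\langle D(f),g\rangle=-\langle f,D(g)\rangle=-\langle f-g,D(g)\rangle=0$. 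This is shorter, needs no decomposition of $g$, no hypothesis $2k\le d$, and works verbatim at any smooth point of $\sigma_kC_d$ (e.g.\ at border-rank points like $x^3y\in\sigma_2C_4$, which the paper's Example exhibits but which its decomposition-based argument does not literally cover); it is in fact the mechanism that makes the result generalize to arbitrary tensors as in \cite{DOT}. For part (ii) you follow essentially the paper's route: reduce to the statement that the critical rank $1$ tensors span $H_f$, which is Proposition \ref{prop:main2}, and your justification ($d$ independent points on the rational normal curve when $D(f)$ has simple roots, $h^1(\mathcal{I}_{Z,\Pe^1}(d))=h^1(\OO_{\Pe^1})=0$ in general, direct spanning when $D(f)=0$) matches the paper's cohomological count. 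One caveat, which you inherit from the paper rather than introduce: when $D(f)$ has multiple roots, the scheme $Z$ spans $H_f$ but the \emph{reduced} set of critical rank one tensors may span a strictly smaller space (e.g.\ $f=x^3$, where $D(f)=3x^2y$ and the honest critical rank one tensors span only a plane inside the $3$-dimensional $H_f$), so the literal statement of (ii) requires either a genericity assumption or a scheme-theoretic reading; your proof is at exactly the same level of rigor as the paper's on this point.
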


Theorem \ref{thm:main} follows after Theorem \ref{mainTheorem} and Proposition \ref{prop:main2}. Note that Theorem \ref{thm:main} may applied to the best rank $k$ approximation of $f$, which turns out to be contained in $H_f$
and may then be written as a linear combination of the  critical rank $1$ tensors for $f$. This statement may be seen as a weak extension of the Eckart-Young Theorem to tensors. Indeed, in the case of matrices, the best rank $k$ approximation is exactly the sum of the first $k$ critical rank one tensors, by the Eckart-Young Theorem, see \cite{OP}. The polynomial $f$ itself may be written as linear combination of its critical rank $1$ tensors, see Corollary \ref{cor:corf}, this statement may be seen as a {\it spectral decomposition for $f$}.  All these statements may be generalized to the larger class of tensors, not necessarily symmetric, in any dimension, see \cite{DOT}.

In \S\ref{sec:lastreal} we report about some numerical experiments regarding the number of real critical rank $2$ tensors in $\mathrm{Sym}^4\R^2$.

\section{Preliminaries}
Let $V=\R^2$ equipped with the Euclidean scalar product. 
The associated quadratic form has the coordinate expression $x^2+y^2$,
with respect to the orthonormal basis $x, y$.
The scalar product can be extended to a scalar product on the tensor space $\mathrm{Sym}^dV$ of binary forms,
which is $SO(V)$-invariant.
For powers $l^d$, $m^d$ where $l, m\in V$, we set
$\langle l^d, m^d\rangle : = \langle l, m\rangle^d$
and by linearity this defines the scalar product on the whole $\mathrm{Sym}^dV$ (see Lemma \ref{lema:scalarproduct}).

Denote as usual $\left\|{f}\right\|=\sqrt{\langle f, f\rangle}$.

For binary forms which split in the product of linear forms we have the formula
\begin{equation}\label{eq:decomp}\langle l_1l_2\cdots l_d, m_1m_2\cdots m_d\rangle =
\frac{1}{d!}\sum_{\sigma}\langle l_1,m_{\sigma(1)}\rangle \langle l_2,m_{\sigma(2)}\rangle 
\cdots \langle l_d,m_{\sigma(d)}\rangle \end{equation}

The powers $l^d$ are exactly the tensors of rank one in $\mathrm{Sym}^dV$,
they make a cone $C_d$ over the rational normal curve.

The sums $l_1^d+\ldots +l_k^d$ are the tensors of rank $\le k$, and equality holds when the number of summands is minimal. The closure of the set of tensors of rank $\le k$, both in the Euclidean or in the Zariski topology, is a cone $\sigma_kC_d$,
which is the $k$-secant variety of $C_d$.

The Euclidean distance function $d(f,g)=\left\|f-g\right\|$ is our objective function. 
The optimization problem we are interested is, given a real $f$, to minimize $d(f,g)$ with the
constraint that $g\in \left(\sigma_kC_d\right)_\R$. This is equivalent to minimize the square function $d^2(f,g)$,
which has the advantage to be algebraic. The number of complex critical points of the square distance function $d^2$
is called the Euclidean distance degree (EDdegree \cite{DHOST}) of $\sigma_kC_d$ and has been computed for small values of
$k, d$ in the rightmost chart in Table 4.1 of \cite{OSS}. We do not know a closed formula for these values,
although \cite[Theorem 3.7]{OSS} computes them in the case of a general quadratic distance function, not
$SO(2)$-invariant.

\section{Critical points of the distance function}

Let us recall the notion of eigenvector for symmetric tensors (see \cite{Lim, Qi},\cite[Theorem 4.4]{OP}). 
\begin{defn}\label{def:eigentensor}
Let $f\in \mathrm{Sym}^d V$. We say that a nonzero rank $1$ tensor is a  critical rank one tensor for $f$ if it is a critical point of the distance function from $f$ to the variety of rank $1$ tensors. It is convenient to write a critical rank one tensor in the form $\lambda v^d$ with $\left\|{v}\right\|=1$, in this way
$v$ is defined up to $d$-th roots of unity and is called an eigenvector of $f$ with eigenvalue $\lambda$.
\end{defn}

\begin{remark}
Let $d=2$ and let $f$ be a symmetric matrix.
 All the critical rank one tensors of $f$ have the form $\lambda v^2$ where $v$ is a classical eigenvector of norm $1$ for the symmetric matrix $f$, with eigenvalue $\lambda$. 
\end{remark}

\begin{lem}\label{lem:eigentensor}
Given $f\in\mathrm{Sym}^d V$, the point $\lambda v^d$ of rank $1$, with $\left\|{v}\right\|=1$, is a critical rank one tensor for $f$ if and only if $\langle f,v^{d-1}w\rangle=
\lambda \langle v,w\rangle$ $\forall w\in V$, which can be written (identifying $V$ with $V^\vee$ according to the Euclidean scalar product) as 
$$f\cdot v^{d-1}= \lambda v,$$ with $\lambda=\langle f, v^d\rangle  $.
\end{lem}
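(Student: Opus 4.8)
The plan is to apply the standard first-order criterion for critical points of a squared distance function: at a smooth point $g$ of a subvariety $X\subset\mathrm{Sym}^dV$, the point $g$ is critical for the restriction of $g\mapsto\|f-g\|^2$ to $X$ if and only if $f-g$ is orthogonal to the tangent space $T_gX$. (As recalled in the text, away from $g=f$ this is the same as being critical for the distance function itself.) Here $X=C_d$; since a critical rank one tensor is nonzero by Definition \ref{def:eigentensor}, I may write it as $\lambda v^d$ with $\lambda\neq 0$ and $\|v\|=1$, and every such point is a smooth point of the cone $C_d$, so the criterion applies.

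First I would compute $T_{\lambda v^d}C_d=T_{v^d}C_d$. Parametrizing $C_d$ by the map $u\mapsto u^d$ and differentiating at $v$ in a direction $w\in V$ gives the tangent vector $d\,v^{d-1}w$, so $T_{v^d}C_d=\{v^{d-1}w:w\in V\}$, a two-dimensional space (spanned by $v^d=v^{d-1}v$ and by $v^{d-1}w_0$ for any $w_0$ not proportional to $v$). Imposing $\langle f-\lambda v^d,v^{d-1}w\rangle=0$ for all $w\in V$ then gives $\langle f,v^{d-1}w\rangle=\lambda\langle v^d,v^{d-1}w\rangle$ for all $w$.

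The only computation needed is $\langle v^d,v^{d-1}w\rangle=\langle v,v\rangle^{d-1}\langle v,w\rangle=\langle v,w\rangle$: this follows from the splitting formula \eqref{eq:decomp}, since each of the $d!$ permutations in that sum contributes the same factor $\langle v,v\rangle^{d-1}\langle v,w\rangle$ (exactly one index is matched with $w$), together with $\|v\|=1$. Hence the critical-point condition becomes $\langle f,v^{d-1}w\rangle=\lambda\langle v,w\rangle$ for all $w\in V$; setting $w=v$ yields $\lambda=\langle f,v^d\rangle$. Reading the left-hand side as the contraction $f\cdot v^{d-1}$, viewed as a vector of $V$ via the Euclidean identification $V\cong V^\vee$, and the right-hand side as $\lambda v$, this is precisely the equation $f\cdot v^{d-1}=\lambda v$. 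The converse is immediate because all the implications above are reversible. I do not expect a real obstacle: the only points needing a line of care are checking that $\lambda v^d$ with $\lambda\neq 0$ is a smooth point of $C_d$ (so that the tangent-space criterion is legitimate), and observing that the scalar $d$ produced by differentiating $u\mapsto u^d$ drops out since it is absorbed by the arbitrary direction $w$.
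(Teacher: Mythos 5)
Your argument is correct and follows essentially the same route as the paper: both impose that $f-\lambda v^d$ be orthogonal to the tangent space $v^{d-1}V$ and use $\langle v^d,v^{d-1}w\rangle=\|v\|^{2d-2}\langle v,w\rangle=\langle v,w\rangle$ to reduce to the stated equation, then set $w=v$ to identify $\lambda=\langle f,v^d\rangle$. Your extra remarks on smoothness of the cone and the computation of the tangent space via the parametrization $u\mapsto u^d$ are just more explicit versions of what the paper leaves implicit.
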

\begin{proof} The property of critical point is equivalent to $f-\lambda v^d$ being orthogonal to
$v^{d-1}w$ $\forall w\in V$, which gives
$\langle f, v^{d-1}w\rangle =\langle \lambda v^d,v^{d-1}w\rangle$
$\forall w\in V$. The right-hand side is $\left\|{v}\right\|^{2d-2} \lambda \langle v,w\rangle=\lambda \langle v,w\rangle$, as we wanted.
Setting $w=v$ we get $\langle f, v^{d}\rangle =\lambda$.
\end{proof}

On the other hand, eigenvectors correspond to critical points of the function
$f(x,y)$ restricted on the circle $S^1=\{(x,y)|x^2+y^2=1\}$ (\cite{Lim, Qi}).
By Lagrange multiplier method, we can compute the eigenvectors of $f$ as the normalized solutions $(x,y)$ of:
\begin{equation}\label{eq1}
\mathrm{rank} 
\begin{bmatrix}
f_{x} & f_{y}  \\
x & y  
\end{bmatrix} \leq 1
\end{equation}
This corresponds with the roots of discriminant polynomial $D(f)=yf_x-xf_y$. 
$D$ is a well known differential operator which satisfies the Leibniz rule, i.e. $D(fg)=D(f)g+fD(g)$
$\forall f, g\in \mathrm{Sym}^d V$.
For any $l=ax+by\in V$ denote $l^\perp=D(l)=-bx+ay$. Note that $\langle l,l^\perp\rangle =0$.

We have the following:
\begin{prop}\label{prop:eigendisc} Consider $f(x,y)\in\mathrm{Sym}^dV$:
\begin{itemize}
	\item If $v$ is eigenvector of $f$ then $D(v)=v^\perp$ is a linear factor of $D(f)$.
	\item Assume that $D(f)$ splits as product of distinct linear factors and $v^\perp|D(f)$, then $\frac{v}{\left\|{v}\right\|}$ is an eigenvector of $f$.
\end{itemize}
\end{prop}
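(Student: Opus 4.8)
The plan is to reduce the whole statement to the description of eigenvectors as normalized roots of the binary form $D(f)=yf_x-xf_y$ recorded in \eqref{eq1}, and then to translate ``being a root of $D(f)$'' into ``being a linear factor of $D(f)$'', which is legitimate precisely because $D(f)$ is a homogeneous form of degree $d$ in two variables. The one small computation to carry out first is the identification of the relevant linear factor: if $v=ax+by$, then $v^\perp=D(v)=-bx+ay$ is, up to scalar, the unique linear form vanishing at the point $(a,b)$, so by unique factorization of binary forms one has $v^\perp\mid D(f)$ if and only if $D(f)(a,b)=0$; and $D(f)(a,b)=bf_x(a,b)-af_y(a,b)$ is exactly the $2\times 2$ minor appearing in \eqref{eq1}. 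In other words, both bullets are the two directions of the equivalence ``$v/\|v\|$ satisfies the rank condition of \eqref{eq1}'' $\iff$ ``$v^\perp\mid D(f)$''.

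For the first bullet I would argue: let $v$ with $\|v\|=1$ be an eigenvector of $f$. By Lemma~\ref{lem:eigentensor} (equivalently, by \eqref{eq1}) the matrix with rows $(f_x,f_y)$ and $(x,y)$ has rank $\le 1$ at $(a,b)$, hence $D(f)(a,b)=0$, and by the dictionary above the linear form $v^\perp=D(v)$ divides $D(f)$ (if $D(f)\equiv 0$ the assertion is vacuous). Note that this direction requires no hypothesis on $f$.

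For the second bullet I would run the dictionary backwards: from $v^\perp\mid D(f)$ we obtain $D(f)(a,b)=0$, so $(f_x(a,b),f_y(a,b))$ is proportional to $(a,b)$, i.e. the rank condition of \eqref{eq1} holds at $(a,b)$; by \eqref{eq1} this says precisely that $v/\|v\|$ is an eigenvector of $f$ — as soon as the normalization $v/\|v\|$ is meaningful, that is $\|v\|\neq 0$. I expect this last point to be the only genuine subtlety: for a real $f$ whose $D(f)$ splits into distinct real linear factors, $v$ is automatically a nonzero real vector, so $\|v\|>0$ and the conclusion is immediate, and one gets a bijection between the $d$ linear factors of $D(f)$ and the $d$ eigenvector directions; if instead one reads the factorization over $\C$, one has to use the distinctness of the factors (together with $D(f)\not\equiv 0$) to stay away from the isotropic directions $x\pm iy$, for which $v$ cannot be normalized. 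Modulo this normalization issue, the second bullet is simply the first bullet read in reverse, so I do not anticipate any real obstacle beyond keeping the signs and the identification $v\leftrightarrow v^\perp=D(v)$ straight.
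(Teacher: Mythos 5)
Your proof is correct, but it takes a genuinely different route from the paper's. The paper proves the first bullet structurally: by Lemma \ref{lem:eigentensor} the tensor $\langle f,v^d\rangle v^d$ is a critical rank one tensor, so Prop.~\ref{prop:critical} yields a decomposition $f=\langle f,v^d\rangle v^d+h\left(v^\perp\right)^2$ with $h\in\mathrm{Sym}^{d-2}V$; applying $D$ with the Leibniz rule and $D(v)=v^\perp$, $D(v^\perp)=-v$, each of the three resulting terms is visibly divisible by $v^\perp$. For the converse the paper uses a counting argument: the $d$ eigenvectors (with multiplicity) already supply, via the first bullet, $d$ linear factors of the degree-$d$ form $D(f)$, and since the factors are assumed distinct these must exhaust all of them. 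You instead derive both bullets from a single equivalence, the Lagrange-multiplier description \eqref{eq1}: $v=ax+by$ is an eigenvector iff $D(f)(a,b)=0$ iff $v^\perp\mid D(f)$, since $v^\perp$ is, up to scalar, the unique linear form vanishing at $(a,b)$. This is more elementary (it bypasses Prop.~\ref{prop:critical} and the normal-space computation entirely), makes the second bullet an honest converse rather than a pigeonhole argument, and is more attentive than the paper to the normalization issue $\left\|{v}\right\|\neq 0$ over $\C$ (though note that distinctness of the factors does not by itself exclude an isotropic simple factor of $D(f)$; in that degenerate case $v/\left\|{v}\right\|$ is simply undefined and the statement cannot be asserted, a point the paper also passes over). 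The one step you should make explicit is the bridge between the paper's intrinsic notion of eigenvector (Definition \ref{def:eigentensor} via Lemma \ref{lem:eigentensor}) and \eqref{eq1}: the identity $\langle f, v^{d-1}w\rangle=\tfrac{1}{d}\langle \nabla f(v), w\rangle$, which follows from $\langle f, l^d\rangle=f(l)$ by differentiating along $w$, shows that $f\cdot v^{d-1}=\lambda v$ is equivalent to the rank condition of \eqref{eq1}. The paper asserts this correspondence (citing Lim and Qi) but never proves it, so your argument silently rests on a stated-but-unproved fact; the verification is one line, but it should be recorded.
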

We postpone the proof after Prop. \ref{prop:critical}.

Now let us differentiate some cases in terms of $D(f)$ (see Theorem $2.7$ of \cite{ASS}):
\begin{itemize}
	\item if $d$ is odd: $D(f)= 0$ if and only if $f= 0$, in particular $D:\mathrm{Sym}^dV\rightarrow \mathrm{Sym}^dV$ is an isomorphism.
	\item if $d$ is even: $D(f)=0$ if and only if $f=c(x^2+y^2)^{d/2}$ for some $c\in\R$. We will show in Lemma \ref{lemma2} which are the eigenvectors in this case. The image of $D:\mathrm{Sym}^dV\rightarrow \mathrm{Sym}^dV$ is the space orthogonal to $f=(x^2+y^2)^{d/2}$.
\end{itemize}

\begin{lem} (\cite{LS}, Section $2$)\label{lema:scalarproduct}
Suppose $f=\sum_{i=0}^{d} \binom{d}{i} a_i x^iy^{d-i}$ and $g=\sum_{i=0}^{d} \binom{d}{i} b_i x^iy^{d-i}$. Then we get:
\begin{equation}\label{eq:scalar}
\langle f,g\rangle:=\sum_{i=0}^{d} \binom{d}{i} a_ib_i
\end{equation}
where $\langle\,,\,\rangle$ is the scalar product defined in the introduction.
\end{lem}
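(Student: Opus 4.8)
The plan is to produce the explicit bilinear form prescribed by the right-hand side of \eqref{eq:scalar} and to verify that it coincides with the scalar product defined in the introduction. Set $B(f,g):=\sum_{i=0}^{d}\binom{d}{i}a_ib_i$, where $a_i,b_i$ are the normalized coefficients of $f,g$ in the monomial basis $x^iy^{d-i}$. Since $B$ is defined directly through coordinates in a fixed basis, it is manifestly well defined and bilinear; the content of the lemma is precisely that $B$ equals $\langle\,,\,\rangle$.

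The scalar product of the introduction is prescribed on powers by $\langle l^d,m^d\rangle=\langle l,m\rangle^d$ and then extended by linearity. Because the powers $l^d$ span $\mathrm{Sym}^dV$, and because both $B$ and $\langle\,,\,\rangle$ are bilinear, it suffices to check that they agree on every pair $(l^d,m^d)$. First I would expand a power by the binomial theorem: for $l=ax+by$ one has $l^d=\sum_{i=0}^{d}\binom{d}{i}a^ib^{d-i}x^iy^{d-i}$, so in the notation of the lemma the normalized coefficient of $l^d$ is $a^ib^{d-i}$; likewise, for $m=a'x+b'y$, the normalized coefficient of $m^d$ is $(a')^i(b')^{d-i}$. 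Substituting into $B$ and invoking the binomial theorem once more gives
\begin{equation*}
B(l^d,m^d)=\sum_{i=0}^{d}\binom{d}{i}(aa')^i(bb')^{d-i}=(aa'+bb')^d=\langle l,m\rangle^d=\langle l^d,m^d\rangle,
\end{equation*}
using $\langle l,m\rangle=aa'+bb'$ since $x,y$ are orthonormal. Thus $B$ and $\langle\,,\,\rangle$ agree on all pairs of powers. To pass from this to global agreement, write any $g=\sum_\beta c_\beta m_\beta^d$ and note that bilinearity gives $B(l^d,g)=\sum_\beta c_\beta B(l^d,m_\beta^d)=\sum_\beta c_\beta\langle l^d,m_\beta^d\rangle=\langle l^d,g\rangle$; repeating the argument in the first slot yields $B=\langle\,,\,\rangle$ everywhere, which is the claimed formula.

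The only delicate point is this passage from agreement on powers to agreement everywhere, and it is exactly here that the explicit formula earns its keep. The powers $l^d$ are far from linearly independent, so the extension of $\langle\,,\,\rangle$ by linearity is a priori ambiguous: one must know that the value assigned to $f$ is independent of how $f$ is written as a combination of powers. Exhibiting the coordinate expression $B$ removes this ambiguity, since $B$ is a genuine, unambiguous bilinear form that is forced to coincide with any bilinear extension agreeing with $\langle l,m\rangle^d$ on the spanning set of powers. I expect this well-definedness observation, rather than the binomial bookkeeping, to be the real crux of the argument.
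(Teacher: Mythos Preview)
Your proof is correct and follows the same approach as the paper: reduce by bilinearity to the case $f=l^d$, $g=m^d$, expand via the binomial theorem, and recognize the resulting sum as $(aa'+bb')^d=\langle l,m\rangle^d$. Your added remarks on well-definedness make explicit a point the paper leaves implicit, but the core computation is identical.
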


\begin{proof}
By linearity we may assume $f=(\alpha x+\beta y)^d$ and $g=(\alpha'x+\beta' y)^d$. The right-hand side of (\ref{eq:scalar}) gives
$$\langle f,g\rangle=\sum_{i=0}^{d} \binom{d}{i}(\alpha\alpha')^i(\beta \beta')^{d-i}=(\alpha\alpha'+\beta\beta')^d$$
which agrees with $\langle \alpha x+\beta y,\alpha' x+\beta' y\rangle^d$.
\end{proof}

\begin{lem}\label{remark:2}
Let $f=(x^2+y^2)^{d/2}\in \mathrm{Sym}^d V$ with $d$ even, and $v=\alpha x+\beta y\in V$, $v\neq 0$,  then $\langle v^d,f\rangle=\left\|{v}\right\|^d$.
\end{lem}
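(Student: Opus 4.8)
The plan is to prove $\langle v^d, f\rangle = \|v\|^d$ for $f = (x^2+y^2)^{d/2}$ directly from the explicit formula for the scalar product in Lemma \ref{lema:scalarproduct}. First I would expand $f = (x^2+y^2)^{d/2}$ in the monomial basis: using the binomial theorem, $(x^2+y^2)^{d/2} = \sum_{j=0}^{d/2}\binom{d/2}{j}x^{2j}y^{d-2j}$, so in the notation of Lemma \ref{lema:scalarproduct} the coefficients $b_i$ of $f$ (where $f = \sum_i \binom{d}{i}b_i x^iy^{d-i}$) are $b_i = \binom{d/2}{i/2}\big/\binom{d}{i}$ when $i$ is even and $b_i = 0$ when $i$ is odd.

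Next I would write $v = \alpha x+\beta y$, so that $v^d = \sum_{i=0}^d \binom{d}{i}\alpha^i\beta^{d-i} x^iy^{d-i}$, i.e. the coefficients $a_i$ of $v^d$ are $a_i = \alpha^i\beta^{d-i}$. Then Lemma \ref{lema:scalarproduct} gives
\begin{equation*}
\langle v^d, f\rangle = \sum_{i=0}^d \binom{d}{i} a_i b_i = \sum_{\substack{i=0\\ i \text{ even}}}^d \binom{d}{i}\alpha^i\beta^{d-i}\cdot\frac{\binom{d/2}{i/2}}{\binom{d}{i}} = \sum_{j=0}^{d/2}\binom{d/2}{j}\alpha^{2j}\beta^{d-2j} = \sum_{j=0}^{d/2}\binom{d/2}{j}(\alpha^2)^j(\beta^2)^{d/2-j}.
\end{equation*}
By the binomial theorem this last sum equals $(\alpha^2+\beta^2)^{d/2} = \|v\|^{2 \cdot d/2} = \|v\|^d$, since $\|v\|^2 = \alpha^2+\beta^2$.

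An even shorter route, which I would prefer to present, avoids the explicit coefficients entirely: write $x^2+y^2 = (x+iy)(x-iy)$ over $\C$, so $f = (x+iy)^{d/2}(x-iy)^{d/2}$, and then use the product formula \eqref{eq:decomp} together with $\langle v, x+iy\rangle\langle v, x-iy\rangle = (\alpha+i\beta)(\alpha-i\beta) = \alpha^2+\beta^2 = \|v\|^2$. Since every term in the symmetrized sum in \eqref{eq:decomp} contributes the same product $\|v\|^d$ (each permutation pairs the $d$ copies of $v$ with $d/2$ factors $x+iy$ and $d/2$ factors $x-iy$), one gets $\langle v^d, f\rangle = \|v\|^d$ immediately. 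There is essentially no obstacle here; the only point requiring a little care is the bookkeeping of binomial coefficients in the first approach, or the observation in the second approach that $\langle v^d, l_1\cdots l_d\rangle$ is independent of the ordering of the $l_i$ (which is clear since $v^d$ is symmetric). Either way the computation is routine.
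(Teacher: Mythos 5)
Your preferred second argument is exactly the paper's proof: the paper factors $x^2+y^2$ into the two conjugate linear forms $x\pm iy$ and applies \eqref{eq:decomp} to obtain $\langle v^d,f\rangle=\langle x^2+y^2, v^2\rangle^{d/2}=(\alpha^2+\beta^2)^{d/2}=\left\|v\right\|^d$. Your first, coefficient-based computation via Lemma \ref{lema:scalarproduct} is also correct and makes the ``grain of salt'' in the paper's use of \eqref{eq:decomp} fully rigorous, but it is not needed.
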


\begin{proof}
By applying (\ref{eq:decomp}) with a grain of salt (e.g. decomposing $x^2+y^2$
into two conjugates linear factors) we get

$$\langle v^d,f\rangle=\langle (x^2+y^2),v^2\rangle^{d/2} = 
(\alpha^2+\beta^2)^{d/2}=\left\|{v}\right\|^d.$$
\end{proof}

\begin{lem}\label{lemma2}
If $f=(x^2+y^2)^{d/2}\in \mathrm{Sym}^d V$ then, for every  nonzero $v\in V$, 
$\langle f, v^{d-1}w\rangle=\left\|{v}\right\|^{d-2}\langle v, w\rangle$.
In particular every vector $v$ of norm $1$ is eigenvector of $f$ with eigenvalue $1$.

\end{lem}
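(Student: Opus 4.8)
The plan is to exploit the $SO(V)$-invariance of the scalar product together with the obvious $SO(V)$-invariance of $f=(x^2+y^2)^{d/2}$. Since any $R\in SO(V)$ acts as an algebra automorphism of $\mathrm{Sym}^\bullet V$, we have $R(v^{d-1}w)=(Rv)^{d-1}(Rw)$ and $Rf=f$, hence $\langle f,v^{d-1}w\rangle=\langle f,(Rv)^{d-1}(Rw)\rangle$; likewise $\langle v,w\rangle=\langle Rv,Rw\rangle$. Choosing $R$ with $Rv=\left\|v\right\|x$ and replacing $w$ by $Rw$, both sides pick up a common factor $\left\|v\right\|^{d-1}$, so the claim reduces to the single identity $\langle f,x^{d-1}u\rangle=\langle x,u\rangle$ for all $u\in V$.

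Next I would verify this reduced identity by a direct coordinate computation. Write $u=ax+by$, so $x^{d-1}u=a\,x^d+b\,x^{d-1}y$, and expand $f=(x^2+y^2)^{d/2}=\sum_{i}\binom{d/2}{i}x^{2i}y^{d-2i}$ (recall $d$ is even). Passing to the normalized basis $\binom{d}{j}x^jy^{d-j}$ and applying Lemma \ref{lema:scalarproduct}, only the $x^d$ and $x^{d-1}y$ coefficients of $x^{d-1}u$ are nonzero: the $x^{d-1}y$ term pairs against the $x^{d-1}y$-coefficient of $f$, which vanishes because $d-1$ is odd, while the $x^d$ term pairs against the corresponding coefficient of $f$, which equals $1$. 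Hence $\langle f,x^{d-1}u\rangle=a=\langle x,u\rangle$, as wanted. (Alternatively one may skip the rotation and compute $\langle v^{d-1}w,(x+iy)^{d/2}(x-iy)^{d/2}\rangle$ directly from the expansion (\ref{eq:decomp}), grouping the permutations according to whether the factor $w$ is matched with an $x+iy$ or an $x-iy$; the two groups each contribute $\tfrac{d}{2}(d-1)!$ terms, and, after using $\langle v,x+iy\rangle\langle v,x-iy\rangle=\left\|v\right\|^2$, they collapse exactly to $\left\|v\right\|^{d-2}\langle v,w\rangle$.)

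Finally, for the ``in particular'' assertion, set $\left\|v\right\|=1$: then $\langle f,v^{d-1}w\rangle=\langle v,w\rangle$ for every $w\in V$, which is precisely the eigenvector condition of Lemma \ref{lem:eigentensor}, and taking $w=v$ (or invoking Lemma \ref{remark:2}) yields the eigenvalue $\lambda=\langle f,v^d\rangle=1$. I do not expect a genuine obstacle here: once the rotation reduction is in place everything is routine, and the only point demanding a little care is the bookkeeping of the binomial normalizations in Lemma \ref{lema:scalarproduct} (equivalently, the $\tfrac{1}{d!}$ and the permutation multiplicities in (\ref{eq:decomp})).
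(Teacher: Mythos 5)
Your argument is correct, and your primary route is genuinely different from the paper's. The paper proves this in one line by applying the multiplicativity of the scalar product on split forms (formula (\ref{eq:decomp}), ``with a grain of salt'' via $x^2+y^2=(x+iy)(x-iy)$) to get $\langle f, v^{d-1}w\rangle = \langle x^2+y^2, v^2\rangle^{d/2-1}\langle x^2+y^2, vw\rangle = \left\|v\right\|^{d-2}\langle v,w\rangle$ directly — this is exactly your parenthetical alternative, where your count of $\tfrac{d}{2}(d-1)!$ permutations in each group and the collapse via $\langle v,x+iy\rangle\langle v,x-iy\rangle=\left\|v\right\|^2$ checks out. Your main route instead uses the $SO(V)$-equivariance (stated in the Preliminaries) to rotate $v$ to $\left\|v\right\|x$, correctly extracts the common factor $\left\|v\right\|^{d-1}$ from both sides, and then verifies $\langle f, x^{d-1}u\rangle=\langle x,u\rangle$ by the coefficient formula of Lemma \ref{lema:scalarproduct}; the parity observation that the $x^{d-1}y$-coefficient of $f$ vanishes and the $x^d$-coefficient is $1$ is right. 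What the reduction buys you is that you avoid the slightly informal use of (\ref{eq:decomp}) on the non-split real form $x^2+y^2$, at the cost of invoking invariance (note your rotation argument uses that $v$ is a \emph{real} nonzero vector, which is what the lemma assumes, so there is no gap). The ``in particular'' step matches the paper's: set $\left\|v\right\|=1$ and compare with Lemma \ref{lem:eigentensor}, getting $\lambda=\langle f,v^d\rangle=1$.
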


\begin{proof}
As in Lemma \ref{remark:2} we get

$$\langle f, v^{d-1}w\rangle = {\langle (x^2+y^2),v^2\rangle}^{d/2-1}
\langle (x^2+y^2),vw\rangle = \left\|{v}\right\|^{d-2}\langle v, w\rangle.$$

The second part follows by putting $w=v$ and equating with Lemma \ref{remark:2}. We get
$\langle f, v^{d-1}w\rangle=\langle v^d,f\rangle\langle v, w\rangle$ just in the case $|v|=1$.

\end{proof}
\begin{remark}
Lemma \ref{lemma2} extends the fact that every vector of norm $1$ is eigenvector of the identity matrix with eigenvalue $1$.
The geometric interpretation of this lemma  
is that the $2$-dimensional cone
of rank $1$ degree $d$ binary forms cuts any sphere centered in $(x^2+y^2)^{d/2}$
in a curve. This curve
\end{remark}

\begin{lem}\label{lem}
The normal space at $l^d\in C_d$ coincides with $\left(l^\perp\right)^2\cdot \mathrm{Sym}^{d-2}V$
\end{lem}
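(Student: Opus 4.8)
The plan is to match the two sides by a dimension count together with an orthogonality relation. Parametrizing $C_d$ by $l\mapsto l^d$, the differential at $l$ sends $w\in V$ to $d\,l^{d-1}w$, so $T_{l^d}C_d=l^{d-1}\cdot V$; since multiplication by the nonzero form $l^{d-1}$ is injective this space is $2$-dimensional (in particular $l^d$ is a smooth point of $C_d$, as $l\neq 0$), and hence its orthogonal complement $N_{l^d}C_d$ in $\mathrm{Sym}^dV$ has dimension $(d+1)-2=d-1$. On the other hand $(l^\perp)^2\cdot\mathrm{Sym}^{d-2}V$ is the image of $\mathrm{Sym}^{d-2}V$ under multiplication by the nonzero form $(l^\perp)^2$, hence likewise has dimension $\dim\mathrm{Sym}^{d-2}V=d-1$. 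Therefore it suffices to prove the inclusion $(l^\perp)^2\cdot\mathrm{Sym}^{d-2}V\subseteq N_{l^d}C_d$, i.e. that $\langle (l^\perp)^2 g,\,l^{d-1}w\rangle=0$ for every $g\in\mathrm{Sym}^{d-2}V$ and $w\in V$.

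For this orthogonality I would exploit the $SO(V)$-invariance of the scalar product: rescaling $l$ only rescales $l^d$ and $(l^\perp)^2$ by nonzero scalars, and a rotation carries $l/\|l\|$ to $x$, so we may assume $l=x$, whence $l^\perp=D(x)=y$. Then $(l^\perp)^2 g=y^2g$ is a binary form whose coefficients on the monomials $x^d$ and $x^{d-1}y$ both vanish, while $x^{d-1}w$ is a linear combination of precisely those two monomials; by the coordinate formula for the scalar product in Lemma \ref{lema:scalarproduct} the pairing of two such forms is $0$. Alternatively one can avoid the reduction: by bilinearity take $g=m_1\cdots m_{d-2}$ and apply formula (\ref{eq:decomp}) to $\langle (l^\perp)^2 m_1\cdots m_{d-2},\,l^{d-1}w\rangle$; in the symmetrized sum every permutation is forced to pair at least one of the two factors $l^\perp$ with one of the $d-1$ factors $l$, and that term contributes $\langle l^\perp,l\rangle=0$, so the whole sum vanishes.

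Combining the orthogonality with the equality of dimensions then yields $N_{l^d}C_d=(l^\perp)^2\cdot\mathrm{Sym}^{d-2}V$. I do not foresee a genuine obstacle; the only points needing a little care are the injectivity of multiplication by a fixed nonzero binary form (used twice, to get dimension $d-1$ on both sides) and the bookkeeping in the symmetrized sum, should one prefer the coordinate-free route over the $SO(V)$-reduction.
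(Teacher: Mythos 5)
Your proof is correct and follows essentially the same route as the paper's: identify the tangent space as $l^{d-1}\cdot V$ of dimension $2$, check that $(l^\perp)^2\cdot\mathrm{Sym}^{d-2}V$ is orthogonal to it, and conclude by the dimension count $d-1$ on both sides. The paper merely asserts the orthogonality, whereas you supply the verification via formula (\ref{eq:decomp}) (or the $SO(V)$-reduction to $l=x$); this is a welcome elaboration, not a different approach.
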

\begin{proof}
The tangent space at $l^d$ is spanned by $l^{d-1}V$ and has dimension $2$. The elements in $\left(l^\perp\right)^2\cdot \mathrm{Sym}^{d-2}V$ are orthogonal to the tangent space,
moreover the dimension of this space is the expected one $d-1$.
\end{proof}

\begin{defn}\label{def:kcritical}
We say that $g\in\mathrm{Sym}^dV$ is a critical rank $k$ tensor for $f$ if it is a critical point of the distance function $d(f,\_)$ restricted on $\sigma_kC_d$.
\end{defn}

\begin{prop}\label{prop:critical} Let $2k\le d$. A polynomial $g=\sum_{i=1}^k \mu_i l_i^d \in \sigma_kC_d$ is a critical rank $k$ tensor for $f$ if and only if there exist $h\in\mathrm{Sym}^{d-2k}V$ such that
\begin{equation}\label{eq}
f=\sum_{i=1}^k \mu_i l_i^d +h\cdot \prod_{i=1}^k\left(l_i^\perp\right)^2
\end{equation}
\end{prop}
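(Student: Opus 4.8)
The plan is to make the critical-point condition explicit by computing the tangent space to $\sigma_kC_d$ at $g=\sum_{i=1}^k\mu_i l_i^d$, and then to invoke the general principle that at a smooth point $g$ of a variety, $g$ is a critical point of $d(f,\_)$ if and only if $f-g$ is orthogonal to the tangent space at $g$. Throughout I would assume the natural genericity of the given decomposition, namely that the $l_i$ are pairwise non-proportional and all $\mu_i\neq 0$; if some $\mu_i$ vanishes, or two of the $l_i$ coincide, then $g$ lies on $\sigma_{k-1}C_d$, which is the singular locus of $\sigma_kC_d$, and that degenerate situation must be treated separately.

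To compute the tangent space I would parametrize $\sigma_kC_d$ near $g$ by $\Phi(l_1,\dots,l_k,\mu_1,\dots,\mu_k)=\sum_{i=1}^k\mu_i l_i^d$. The differential of $\Phi$ at the given point has image $\sum_{i=1}^k\bigl(\mu_i\, l_i^{d-1}V+\C\, l_i^d\bigr)$, and since $l_i^d\in l_i^{d-1}V$ and $\mu_i\neq 0$ this equals $\sum_{i=1}^k l_i^{d-1}V$ (this is Terracini's identification $T_g\sigma_kC_d=\sum_i T_{l_i^d}C_d$). Because $2k\le d$, the variety $\sigma_kC_d$ is a proper subvariety and such a $g$ is a smooth point of it, its smooth locus being $\sigma_kC_d\setminus\sigma_{k-1}C_d$; comparing dimensions, the subspaces $l_i^{d-1}V$ are then in direct sum, of total dimension $2k$. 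Consequently $g$ is a critical rank $k$ tensor for $f$ if and only if $f-g$ is orthogonal to $l_i^{d-1}V$ for every $i=1,\dots,k$.

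By Lemma \ref{lem}, the orthogonal complement of $l_i^{d-1}V$ inside $\mathrm{Sym}^dV$ is $(l_i^\perp)^2\cdot\mathrm{Sym}^{d-2}V$, so the condition rewrites as
$$f-g\in\bigcap_{i=1}^k(l_i^\perp)^2\cdot\mathrm{Sym}^{d-2}V.$$
Now $l\mapsto l^\perp=D(l)$ is linear and bijective, so $l_1^\perp,\dots,l_k^\perp$ are pairwise non-proportional linear forms, hence the $(l_i^\perp)^2$ are pairwise coprime. Therefore a homogeneous form divisible by every $(l_i^\perp)^2$ is divisible by the product $\prod_{i=1}^k(l_i^\perp)^2$, which has degree $2k\le d$; the quotient $h:=(f-g)\big/\prod_{i=1}^k(l_i^\perp)^2$ then lies in $\mathrm{Sym}^{d-2k}V$, and we obtain exactly equation $(\ref{eq})$. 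The converse implication is immediate by reading the chain of equivalences backwards: if $f-g=h\cdot\prod_i(l_i^\perp)^2$ then $f-g\in(l_i^\perp)^2\mathrm{Sym}^{d-2}V$ for each $i$, i.e.\ $f-g\perp l_i^{d-1}V$, so $g$ is critical.

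The step I expect to need the most care is the tangent-space computation — specifically the assertion that, for $2k\le d$, a point $g=\sum_{i=1}^k\mu_i l_i^d$ with distinct $l_i$ and nonzero $\mu_i$ is a smooth point of $\sigma_kC_d$ with $T_g\sigma_kC_d=\bigoplus_{i=1}^k l_i^{d-1}V$; this is where the hypothesis $2k\le d$ is used decisively, both to place $g$ off the singular locus $\sigma_{k-1}C_d$ and to keep $\prod_i(l_i^\perp)^2$ inside degree $d$. One should also dispose of the degenerate decompositions (some $\mu_i=0$, or coincident $l_i$), which land on $\sigma_{k-1}C_d$ and which can be reduced to the statement for smaller $k$. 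The remaining ingredients — Lemma \ref{lem} used $k$ times, and the coprimality argument collapsing the intersection of the submodules $(l_i^\perp)^2\mathrm{Sym}^{d-2}V$ to the single principal submodule $\prod_i(l_i^\perp)^2\cdot\mathrm{Sym}^{d-2k}V$ — are routine.
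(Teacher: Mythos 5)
Your proof is correct and follows essentially the same route as the paper: Terracini's lemma to identify $T_g\sigma_kC_d$ with $\sum_i l_i^{d-1}V$, Lemma \ref{lem} to identify each normal space with $(l_i^\perp)^2\cdot\mathrm{Sym}^{d-2}V$, and the observation that the intersection of these is $\prod_i(l_i^\perp)^2\cdot\mathrm{Sym}^{d-2k}V$. You are in fact more explicit than the paper on two points it leaves implicit — the smoothness/genericity of the decomposition and the coprimality argument identifying the intersection of the normal spaces with the principal submodule generated by the product.
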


\begin{proof}
By Terracini Lemma, the tangent space of the point $g\in\sigma_kC_d$ is given by the sum of $k$ tangent spaces at $l_i^d=(a_ix+b_iy)^d$. By Lemma \ref{lem} the normal space of each of these tangent spaces are given by $\left(l_i^\perp\right)^2\cdot \mathrm{Sym}^{d-2}V$. Hence, the normal space to $g$ is given by intersection of the $k$ normal spaces, which is given by 
polynomials $\prod_{i=1}^k\left(l_i^\perp\right)^2 \cdot h$ where $h\in\mathrm{Sym}^{d-2k}V$.

Now suppose that $g$ is a critical rank $k$ tensor for $f$. This means that $f-g$ is in the normal space. Hence, $f-g$ is of the form $\prod_{i=1}^k\left(l_i^\perp\right)^2 \cdot h$ for some $h\in\mathrm{Sym}^{d-2k}V$.

Conversely, if $(\ref{eq})$ holds, we need that $f-g$ belongs to the normal space at $g$ which is also true by the construction of the normal space.
\end{proof}

\begin{proof} [Proof of Prop. \ref{prop:eigendisc}]
If $v$ is eigenvector of $f$ then $\langle f,v^d\rangle v^d$ is critical rank $1$ tensor for $f$ (by Lemma \ref{lem:eigentensor}). By Prop. \ref{prop:critical}
 $f=\langle f,v^d\rangle v^d+h \left(v^\perp\right)^2$ where $h\in\mathrm{Sym}^{d-2}V$. Applying the operator $D$ to $f$ we get by Leibniz rule, since $D(v)=v^{\perp}$ and $D(v^{\perp})=-v$:
$$D(f)=\langle f,v^d\rangle dv^{d-1}v^\perp+D(h)\left(v^\perp\right)^2-2vv^\perp h\Longrightarrow v^\perp|D(f)$$ 
Conversely, since we assume there are $d$ distinct eigenvectors, then we find all the linear factors of $D(f)$.
\end{proof}

This proposition is connected with Theorem $2.5$ of \cite{LS}.

\section{The singular space}\label{sec:singularspace}
In \cite{OP} it was considered the singular space $H_f$ as the hyperplane orthogonal to $D(f)=yf_x-xf_y$. It follows from Prop. \ref{prop:eigendisc}
that the critical rank $1$ tensor for $f$ belong to $H_f$ (since the eigenvectors of $f$ can be computed as the solutions of (\ref{eq1}) that coincides with $D(f)$ for binary forms), see \cite[Def. 5.3]{OP}.
It is worth to give a direct proof that the critical rank $1$ tensors for $f$ belong to $H_f$, the hyperplane orthogonal to $D(f)$,
 based on Prop. 
\ref{prop:critical}.

Let $\mu l^d$ be a critical rank $1$ tensors for $f$, then by Prop. 
\ref{prop:critical} there exist   $h\in\mathrm{Sym}^{d-2}V$ such that 
$f= \mu l^d +h\left(l^\perp\right)^2$.

We have to prove $\langle D(f), l^d\rangle =0$ which follows immediately from (\ref{eq:decomp})
since $l^{\perp}$ divides $D(f)$ by Prop. \ref{prop:eigendisc}.

\begin{lem}\label{lem:lmperp}
Let $l, m\in V$, Then $\langle l^\perp, m\rangle +\langle m^\perp, l\rangle =0$.
\end{lem}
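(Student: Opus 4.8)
The statement is an elementary identity about the linear operator $D$ restricted to $V=\mathrm{Sym}^1V$, so the plan is to verify it directly. I would write $l=a_1x+b_1y$ and $m=a_2x+b_2y$; by the definition $l^\perp=D(l)=-b_1x+a_1y$ and likewise $m^\perp=-b_2x+a_2y$. In degree one the scalar product of Lemma \ref{lema:scalarproduct} is just the standard inner product of the coefficient vectors, so $\langle l^\perp,m\rangle=-b_1a_2+a_1b_2$ and $\langle m^\perp,l\rangle=-b_2a_1+a_2b_1$; adding these two expressions gives $0$, which is exactly the claim.

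Conceptually, the same fact is the observation that $J\colon V\to V$, $Jl=l^\perp$, is rotation by $\pi/2$: since $J$ is orthogonal and $J^2=-\mathrm{id}_V$, its adjoint is $J^\vee=J^{-1}=-J$, hence $\langle l^\perp,m\rangle=\langle Jl,m\rangle=\langle l,J^\vee m\rangle=-\langle l,Jm\rangle=-\langle m^\perp,l\rangle$. There is no real obstacle here; the only point requiring care is to keep the sign convention $l^\perp=-bx+ay$ for $l=ax+by$ consistent throughout, and to remember that $\langle\,,\,\rangle$ on $V$ is the Euclidean product so that $x,y$ is orthonormal and $\langle l,l^\perp\rangle=0$, as already noted in the paper.
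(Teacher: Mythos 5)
Your computation is correct and is exactly the routine coordinate check the paper has in mind when it declares the proof ``Straightforward'': writing $l=a_1x+b_1y$, $m=a_2x+b_2y$ and using $l^\perp=-b_1x+a_1y$ gives $\langle l^\perp,m\rangle+\langle m^\perp,l\rangle=(-b_1a_2+a_1b_2)+(-b_2a_1+a_2b_1)=0$. The remark that $l\mapsto l^\perp$ is a rotation by $\pi/2$, hence skew-adjoint, is a nice conceptual rephrasing but adds nothing beyond the paper's intended argument.
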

\begin{proof}
Straightforward.
\end{proof}

Our main result is

\begin{thm}\label{mainTheorem}
The critical points of the form $\sum_{i=1}^{k}\mu_i l_i^d$ of the distance function $d(f,-)$ restricted on $\sigma_kC_d$
belong to $H_f$.
\end{thm}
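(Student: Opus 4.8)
The plan is to use the characterization of critical rank $k$ tensors from Proposition \ref{prop:critical} and reduce everything to the already-established rank $1$ case. Suppose $g=\sum_{i=1}^k \mu_i l_i^d$ is a critical rank $k$ tensor for $f$. By Proposition \ref{prop:critical} there is some $h\in\mathrm{Sym}^{d-2k}V$ with
\[
f=\sum_{i=1}^k \mu_i l_i^d + h\cdot\prod_{i=1}^k\bigl(l_i^\perp\bigr)^2.
\]
To show $g\in H_f$ we must prove $\langle D(f), g\rangle =0$, i.e.\ $\sum_{i=1}^k \mu_i\langle D(f), l_i^d\rangle =0$. So it suffices to show $\langle D(f), l_j^d\rangle =0$ for each individual $j=1,\dots,k$.

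First I would fix an index $j$ and apply the operator $D$ to both sides of the displayed identity. Using the Leibniz rule together with $D(l_i)=l_i^\perp$ and $D(l_i^\perp)=-l_i$, the term $D\bigl(h\prod_i (l_i^\perp)^2\bigr)$ is a sum of products each of which retains at least one factor $l_j^\perp$ (the factor $(l_j^\perp)^2$ is differentiated to $2 l_j^\perp\cdot(-l_j)$, and in all other summands $(l_j^\perp)^2$ survives untouched), hence $l_j^\perp$ divides $D\bigl(h\prod_i(l_i^\perp)^2\bigr)$. Likewise $D(\mu_i l_i^d)=\mu_i d\, l_i^{d-1} l_i^\perp$, which is divisible by $l_j^\perp$ when $i=j$ but not obviously for $i\neq j$. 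This is the crux: unlike the rank $1$ case, $l_j^\perp$ need not divide $D(f)$ — the other summands $\mu_i d\, l_i^{d-1} l_i^\perp$ for $i\neq j$ obstruct it.

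To get around this, rather than factoring $D(f)$ globally, I would compute $\langle D(f), l_j^d\rangle$ term by term against the right-hand side of the equation for $f$, not its image under $D$. That is, use $\langle D(f), l_j^d\rangle = \sum_i \mu_i\langle D(l_i^d), l_j^d\rangle + \langle D\bigl(h\prod_i(l_i^\perp)^2\bigr), l_j^d\rangle$. Each summand $\langle D(l_i^d), l_j^d\rangle = d\langle l_i^{d-1}l_i^\perp, l_j^d\rangle$ can be evaluated via formula (\ref{eq:decomp}): it equals $d\langle l_i,l_j\rangle^{d-1}\langle l_i^\perp, l_j\rangle$, and the last term $\langle D(h\prod_i(l_i^\perp)^2), l_j^d\rangle$ vanishes by (\ref{eq:decomp}) since every monomial appearing carries a factor $l_j^\perp$ and $\langle l_j^\perp, l_j\rangle =0$. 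So I am reduced to showing
\[
\sum_{i=1}^k \mu_i\, d\, \langle l_i,l_j\rangle^{d-1}\langle l_i^\perp, l_j\rangle = 0
\qquad\text{for each }j.
\]
For the diagonal term $i=j$ this is automatic since $\langle l_j^\perp, l_j\rangle=0$; the difficulty is the off-diagonal terms. Here I would pair $D(f)$ against $l_j^{d-1} l_j^\perp$ as well, or better, exploit the full strength of Proposition \ref{prop:critical}: since $f-g$ is in the normal space to $g$, in particular $f-g\perp l_j^{d-1}w$ for all $w\in V$ (this is the tangent space to $C_d$ at $l_j^d$, a subspace of the tangent space to $\sigma_kC_d$ at $g$). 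I would take $w=l_j^\perp$ and $w=l_j$; combined with the fact, provable from (\ref{eq:decomp}) and Lemma \ref{lem:lmperp}, that $\langle D(f), l_j^d\rangle$ can be rewritten in terms of $\langle f, l_j^{d-1}l_j^\perp\rangle$, the desired vanishing should drop out. Concretely, expanding $D(f)=yf_x-xf_y$ and using integration-by-parts-type identities for $D$ (namely $\langle D(p),q\rangle = -\langle p, D(q)\rangle$, which follows from Lemma \ref{lem:lmperp} and bilinearity — this is the self-adjointness-up-to-sign of $D$), one gets $\langle D(f), l_j^d\rangle = -\langle f, D(l_j^d)\rangle = -d\langle f, l_j^{d-1}l_j^\perp\rangle$, and the latter is zero because $l_j^{d-1}l_j^\perp$ lies in the tangent space to $C_d$ at $l_j^d$ while $f - g \perp$ that tangent space and $g = \sum_i \mu_i l_i^d$ satisfies $\langle l_i^d, l_j^{d-1}l_j^\perp\rangle = \langle l_i,l_j\rangle^{d-1}\langle l_i, l_j^\perp\rangle$ — which unfortunately does not vanish for $i \neq j$ either.

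So the genuinely hard point is handling these cross terms $\langle l_i^d, l_j^{d-1} l_j^\perp\rangle$ for $i\neq j$, and the resolution must use that \emph{all} the equations $f-g\perp l_i^{d-1}w$ hold simultaneously for $i=1,\dots,k$ and all $w$. The plan is therefore to set up the linear system: from $\langle f, l_j^{d-1}l_j^\perp\rangle = \sum_i \mu_i \langle l_i,l_j\rangle^{d-1}\langle l_i,l_j^\perp\rangle$ for each $j$, and the self-adjointness $\langle D(f), l_j^d\rangle = -d\langle f, l_j^{d-1}l_j^\perp\rangle$, show that the vector $\bigl(\langle D(f), l_1^d\rangle, \dots, \langle D(f), l_k^d\rangle\bigr)$ satisfies a relation forcing $\langle D(f), g\rangle = \sum_j \mu_j \langle D(f), l_j^d\rangle = -d\sum_j \mu_j \langle f, l_j^{d-1}l_j^\perp\rangle = 0$. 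Expanding the right side, $\sum_j \mu_j\langle f, l_j^{d-1}l_j^\perp\rangle = \sum_{i,j}\mu_i\mu_j\langle l_i,l_j\rangle^{d-1}\langle l_i,l_j^\perp\rangle$; grouping the $(i,j)$ and $(j,i)$ terms and applying Lemma \ref{lem:lmperp} (which gives $\langle l_i^\perp, l_j\rangle = -\langle l_j^\perp, l_i\rangle$, so that $\langle l_i,l_j\rangle^{d-1}\langle l_i, l_j^\perp\rangle + \langle l_j,l_i\rangle^{d-1}\langle l_j,l_i^\perp\rangle = \langle l_i,l_j\rangle^{d-1}\bigl(\langle l_i,l_j^\perp\rangle + \langle l_j, l_i^\perp\rangle\bigr) = 0$), the double sum cancels in antisymmetric pairs and the diagonal $i=j$ vanishes since $\langle l_j, l_j^\perp\rangle = 0$. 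This antisymmetrization is the key trick, and it is exactly what makes the sum over all $k$ summands vanish even though no individual term need vanish.
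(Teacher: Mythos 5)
Your final paragraph delivers a correct proof, and it ends at exactly the same cancellation as the paper's: the weighted double sum $\sum_{i,j}\mu_i\mu_j\langle l_i,l_j\rangle^{d-1}\langle l_i,l_j^\perp\rangle$ vanishes because the diagonal terms are zero and the $(i,j)$, $(j,i)$ pairs cancel by Lemma \ref{lem:lmperp} (the symmetry of the coefficient $\mu_i\mu_j$ is what makes this work). The route there is mildly but genuinely different. The paper applies $D$ to the decomposition $f=\sum_i\mu_i l_i^d+h\prod_i(l_i^\perp)^2$ of Proposition \ref{prop:critical} via the Leibniz rule and checks, using (\ref{eq:decomp}), that the two summands coming from $D\bigl(h\prod_i(l_i^\perp)^2\bigr)$ pair to zero with each $l_j^d$ because they retain a factor $l_j^\perp$. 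You avoid differentiating the normal-space term altogether: you move $D$ across the pairing via the skew-adjointness $\langle D(p),q\rangle=-\langle p,D(q)\rangle$ to get $\langle D(f),l_j^d\rangle=-d\langle f,l_j^{d-1}l_j^\perp\rangle$, and then use criticality directly ($f-g$ is orthogonal to $l_j^{d-1}V$, either by Terracini or because $(l_j^\perp)^2$ divides $f-g$) to replace $f$ by $g$ before expanding. That is a clean shortcut; the skew-adjointness is not stated in the paper but is legitimately verified, as you say, on the spanning set of powers $l^d$ via (\ref{eq:decomp}) and Lemma \ref{lem:lmperp}, or by noting that $D$ generates the $SO(2)$-action preserving the scalar product. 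Two minor comments: the long middle of your write-up consists of recorded dead ends (you are right that the individual quantities $\langle D(f),l_j^d\rangle$ need not vanish; only the $\mu_j$-weighted sum over $j$ does), so the actual proof is only your last paragraph and should be presented as such; and your version correctly carries the weights $\mu_j$ throughout, which is needed for the antisymmetrization and is handled somewhat loosely in the paper's own display (\ref{eq:dfli}).
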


\begin{proof}
Given a decomposition
$f= \sum_{i=1}^k \mu_i l_i^d +h\cdot \prod_{i=1}^k\left(l_i^\perp\right)^2$, with $h\in\mathrm{Sym}^{d-2k}V$,
we compute 
\begin{equation}\label{eq:3sum}
D(f)=d\sum_{i=1}^k \mu_il_i^{\perp}l_i^{d-1}-\sum_{i=1}^k2l_il_i^{\perp}\prod_{j\neq i}^k\left(l_j^\perp\right)^2h+D(h)\prod_{i=1}^k\left(l_i^\perp\right)^2\end{equation}
and we have to prove \begin{equation}\label{eq:dfli}\langle D(f),\sum_{j=1}^k l_j^d\rangle =0.\end{equation}
We compute separately the contribution of the three summands in (\ref{eq:3sum}) to the scalar product with $l_j^d$.

We have for the first summand
$$\langle \left(\sum_{i=1}^kl_i^{\perp}l_i^{d-1}\right), l_j^d\rangle = \sum_{i=1}^k\langle l_i^\perp, l_j\rangle\langle l_i\cdot l_j\rangle^{d-1}$$
Summing over $j$ we get zero by Lemma \ref{lem:lmperp}.

We have for the second summand
$$\langle\left(\sum_{i=1}^kl_i,l_i^{\perp}\prod_{p\neq i}^k\left(l_p^\perp\right)^2h\right),l_j^d \rangle =
\langle\left(l_jl_j^{\perp}\prod_{p\neq j}^k\left(l_p^\perp\right)^2h\right),l_j^d \rangle=0 $$

We have for the third summand
$$\langle\left(D(h)\prod_{i=1}^k\left(l_i^\perp\right)^2\right), l_j^d\rangle = 0$$
 
Summing up, this proves (\ref{eq:dfli}) and then the thesis.

\end{proof}

\begin{example}
If $f=x^3y+2y^4$ then there are $6$ critical points of the form $l_1^4+l_2^4$ and $x^3y$ which lies on the tangent line at $x^4$. It cannot be written as $l_1^4+l_2^4$ and indeed it has rank $4$.
\end{example}

\section{The scheme of eigenvectors for binary forms}
Suppose $f\in \mathrm{Sym}^d V$ a symmetric tensor and $\mathrm{dim} V=2$. We denote by $Z$ the scheme defined by the polynomial $D(f)$, embedded in $\Pe(\mathrm{Sym}^d V)$ by the $d$-Veronese embedding in $\Pe V$ (see \cite{AEKP} for the case of matrices).

\begin{prop}\label{prop:main2}
$\langle Z \rangle = H_f$.
\end{prop}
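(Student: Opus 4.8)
The plan is to prove the two inclusions $\langle Z\rangle \subseteq H_f$ and $H_f\subseteq\langle Z\rangle$ separately, using the scheme structure of $Z$ rather than just its reduced points, so that the argument is uniform in the multiplicities of the roots of $D(f)$. The first inclusion is essentially already in hand: a point of $Z$ is a linear form $l$ with $l^\perp\mid D(f)$, and its image under the $d$-Veronese is $l^d\in\Pe(\mathrm{Sym}^dV)$. By \eqref{eq:decomp} (using, as in Lemma \ref{remark:2}, the decomposition of $D(f)$ into linear factors one of which is $l^\perp$) one has $\langle D(f),l^d\rangle=0$, so every closed point of $Z$ lies in $H_f$, hence the linear span $\langle Z\rangle$ is contained in the hyperplane $H_f$. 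To get the scheme-theoretic span right when $D(f)$ has a repeated factor $(l^\perp)^m$, I would note that the span of a fat point of multiplicity $m$ on the rational normal curve $C_d$ is the osculating $(m-1)$-plane at $l^d$, spanned by $l^d, l^{d-1}l', \dots, l^{d-m+1}(l')^{m-1}$ for a complementary $l'$; one checks by the same \eqref{eq:decomp} computation that each of these osculating derivatives is orthogonal to $D(f)$, since $(l^\perp)^{m}\mid D(f)$ kills enough derivatives. So $\langle Z\rangle\subseteq H_f$ in all cases.

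For the reverse inclusion it suffices, since $H_f$ is a hyperplane, to show $\dim\langle Z\rangle \ge d-1$, i.e. that $\langle Z\rangle=H_f$ once we know it is contained in $H_f$. The clean way is a dimension/degree count: $Z$ is the length-$d$ subscheme of $C_d\subset\Pe^d$ cut out by $D(f)$ (a single ``point'' of degree $d$ on a line $\Pe V$, Veronese-embedded), and a length-$d$ subscheme of the rational normal curve $C_d$ that is not contained in any hyperplane spans a $\Pe^{d-1}$; in general a length-$d$ subscheme of $C_d$ spans a linear space of dimension $\min(d-1,\deg)-1=d-1$ unless it fails to impose independent conditions — but on the rational normal curve any subscheme of length $\le d+1$ imposes independent conditions on $|\OO_{\Pe^d}(1)|$ (this is the classical fact that $C_d$ is $d$-normal / in ``linearly general position'' scheme-theoretically). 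Hence $\dim\langle Z\rangle = d-1$, and combined with the first part $\langle Z\rangle=H_f$.

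An alternative, more self-contained route avoiding the word ``scheme'' in the generic case: when $D(f)$ has $d$ distinct roots, Prop. \ref{prop:eigendisc} identifies $Z_{\mathrm{red}}$ with the $d$ eigenvectors $v_1,\dots,v_d$ of $f$, the points $v_i^d$ lie on $C_d$, and any $d$ distinct points of the rational normal curve $C_d\subset\Pe^d$ are linearly independent (Vandermonde), so they span a hyperplane, which by the first inclusion must be $H_f$; the special cases ($D(f)$ with repeated roots, or $f=c(x^2+y^2)^{d/2}$ with $D(f)=0$) are then handled by a semicontinuity argument or directly via the osculating-plane computation above. I expect the main obstacle to be precisely the degenerate cases: making the claim ``$Z$ spans a $\Pe^{d-1}$'' rigorous when $D(f)$ is not squarefree — and in particular the extreme case $d$ even, $f=c(x^2+y^2)^{d/2}$, where $D(f)\equiv 0$ so $Z$ is all of $\Pe V$ and $\langle Z\rangle$ is the whole $\Pe(\mathrm{Sym}^dV)$, which must be reconciled with the fact that then $D(f)=0$ imposes no condition and $H_f$ should be read as the whole space as well. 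I would isolate that case first and treat the generic squarefree case by the Vandermonde argument, then fill the gap in between by the osculating-plane orthogonality computation, which is the technical heart.
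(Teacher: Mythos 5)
Your proposal is correct, and its skeleton coincides with the paper's: first show $\langle Z\rangle\subseteq H_f$, then conclude by the dimension count that a length-$d$ subscheme of the rational normal curve $C_d\subset\Pe^d$ spans exactly a hyperplane (you phrase this as ``linearly general position'', the paper computes $\mathrm{codim}\,\langle Z\rangle=h^0(\mathcal{I}_{Z,\Pe^1}(d))=h^0(\OO_{\Pe^1})=1$; these are the same fact). Where you genuinely diverge is in the treatment of the inclusion $\langle Z\rangle\subseteq H_f$ when $D(f)$ is not squarefree: the paper handles this by a limit argument ($f_n\to f$ with $D(f_n)$ squarefree, $H_{f_n}\to H_f$, $\langle Z_{f_n}\rangle\to\langle Z\rangle$), whereas you compute directly that the osculating $(m-1)$-plane at $l^d$, spanned by $l^{d-j}(l')^{j}$ for $0\le j\le m-1$, is orthogonal to $D(f)$ whenever $(l^\perp)^m\mid D(f)$ --- by \eqref{eq:decomp} each permutation term must match at least one of the $m$ copies of $l^\perp$ with a copy of $l$, since there are only $j\le m-1$ copies of $l'$, so every term vanishes. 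Your route is more self-contained and avoids the (mildly delicate) semicontinuity of linear spans under flat limits that the paper's argument implicitly relies on; the paper's route is shorter to state but defers the real content to a degeneration. Both correctly isolate the case $D(f)\equiv 0$ (i.e.\ $f=c(x^2+y^2)^{d/2}$, $d$ even), where $Z=\Pe V$, $\langle Z\rangle$ is the ambient space, and $H_f$ degenerates to the ambient space as well.
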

\begin{proof}
$(i)$ If $D(f)$ has $d$ distinct roots then it is known that $\langle Z \rangle\subseteq H_f$, since $H_f$ is the hyperplane orthogonal to $D(f)$ (Theorem \ref{mainTheorem} with $k=1$). Hence $\langle Z \rangle\subseteq H_f$. 

$(ii)$ Now let us suppose that $D(f)$ has multiple roots but $f\neq (x^2+y^2)^{d/2}$. We show that $\langle Z\rangle\subseteq H_f$ by a limit argument. For every tensor $f$ such that $f\neq 0$ and $f\neq(x^2+y^2)^{d/2}$ there exists a sequence $(f_n)$ such that $f_n\rightarrow f$ and $D(f_n)$ has distinct roots for all $n$. Then, $H_{f_n}\rightarrow H_f$ because the differential operator is continuous. Moreover $H(f_n)$ is a hyperplane for all $n$. On the other hand, by definition we have that $\langle Z_{f_n}\rangle$ is the spanned of the roots of $D(f_n)$. When $f_n$ goes to the limit we get that $\langle Z_{f_n}\rangle\rightarrow \langle Z\rangle$. Hence, $\langle Z\rangle\subseteq H_f$.
	
$(iii)$ In the case that $f=(x^2+y^2)^{d/2}$ with $d$ even, then by Lemma \ref{lemma2} we know that every unitary vector is an eigenvector and $H_f$ is the ambient space. Hence, $\langle Z\rangle=H_f$.

We prove now that $\mathrm{dim} \langle Z \rangle=\mathrm{dim} H_f$ for $(i)$ and $(ii)$. 
Since $\mathcal{I}_{Z,\Pe^1}=\mathcal{O}_{\Pe^1}(-d)$,
$$\mathrm{codim}\langle Z\rangle=h^0(\mathcal{I}_{Z,\Pe^1}(d))=h^0(\mathcal{O}_{\Pe^1}(-d+d))=h^0(\mathcal{O}_{\Pe^1})=1$$
which coincides with the codimension of $H_f$.

\def\niente{
First we suppose that $D(f)$ has $d$ distinct roots. 
It is known that $\langle Z \rangle\subseteq H_f$, since $H_f$ is the hyperplane orthogonal to $D(f)$.
We prove that the equality holds by showing that $\mathrm{dim} \langle Z \rangle=\mathrm{dim} H_f$.

If we embedded the $d$ distinct eigenvectors $v_1,\ldots,v_d$ into the rational normal curve of degree $d$, $C_d$, it turns out $d$ independent elements $v_1^d,\ldots,v_d^d$. The embedding of each of the eigenvectors in $C_d$ is of the form:
$$(1:v_i)\mapsto (1:v_i:v_i^2:\ldots:v_i^d)\quad i=1,\ldots,d$$
We know that these points are independent by using \textit{Vandermonde determinant} since
\begin{equation*}
\mathrm{det}
\begin{bmatrix}
1 & v_1 & v_1^2 & \ldots & v_1^d\\
1 & v_2 & v_2^2 & \ldots & v_2^d\\
\vdots & \vdots & \vdots & \ddots & \vdots\\
1 & v_d & v_d^2 & \ldots & v_d^d
\end{bmatrix}
=\prod_{i<j}(v_j-v_i)\neq 0
\end{equation*}
Hence, the dimension of each of the spaces are equal.

Let us suppose that $D(f)$ has multiple roots but $f\neq (x^2+y^2)^{d/2}$. First we show that $\langle Z\rangle\subseteq H_f$ by a limit argument. For every tensor $f$ such that $f\neq 0$ and $f\neq(x^2+y^2)^{d/2}$ there exists a sequence $(f_n)$ such that $f_n\rightarrow f$ and $D(f_n)$ has distinct roots for all $n$. Then, $H_{f_n}\rightarrow H_f$ because the differential operator is continuous. Moreover $H(f_n)$ is a hyperplane for all $n$. On the other hand, by definition we have that $\langle Z_{f_n}\rangle$ is the spanned of the roots of $D(f_n)$. When $f_n$ goes to the limit we get that $\langle Z_{f_n}\rangle\rightarrow \langle Z\rangle$. Hence, $\langle Z\rangle\subseteq H_f$.

Now we prove the other inclusion. Suppose that we have $r$ distinct eigenvectors $v_1,\ldots,v_r$ with multiplicities $m_1,\ldots,m_r$ and $m_1+\ldots +m_r=d$. 
If we embedded the $r$ distinct eigenvectors into the rational normal curve of degree $d$, $C_d$, it turns out $r$ independent elements $v_1^d,\ldots,v_r^d$. The embedding of each of the eigenvectors in $C_d$ is of the form:
$$(1:v_i)\mapsto (1:v_i:v_i^2:\ldots:v_i^{d-1}:v_i^d)\quad i=1,\ldots,r$$
We consider also its derivatives:
$$(1:v_i)\mapsto (0:1:2v_i:\ldots:(d-1)v_i^{d-2}:dv_i^{d-1})$$
$$(1:v_i)\mapsto (0:0:2:\ldots:(d-1)(d-2)v_i^{d-3}:d (d-1)v_i^{d-2})$$
$$\vdots$$
$$(1:v_i)\mapsto (0:0:0:\ldots:\binom{d-1}{m_i-1}v_i^{d-m_i}:\binom{d}{m_i-1}v_i^{d+1-m_i})$$
We know that these points are independent by using the \textit{Confluent Vandermonde determinant} since,
\begin{equation*}
\mathrm{det}
\begin{bmatrix}
1 & v_1 & v_1^2 & v_1^3 & \ldots & v_1^{d-1} & v_1^d\\
0 & 1 & 2v_1 & 3 v_1^2 & \ldots & (d-1)v_1^{d-2} & dv_1^{d-1}\\
0 & 0 & 2 & 6 v_1 & \ldots & (d-1)(d-2)v_1^{d-3} & d (d-1)v_1^{d-2}\\
\vdots & \vdots & \vdots & \ddots & \vdots\\
1 & v_2 & v_2^2 & v_2^3 & \ldots & v_2^{d-1} & v_2^d\\
0 & 1 & 2v_2 & 3 v_2^2 & \ldots & (d-1)v_2^{d-2} & dv_2^{d-1}\\
0 & 0 & 2 & 6 v_2 & \ldots & (d-1)(d-2)v_2^{d-3} & d (d-1)v_2^{d-2}\\
\vdots & \vdots & \vdots & \ddots & \vdots\\
1 & v_r & v_r^2 & v_r^3 & \ldots & v_r^{d-1} & v_r^d\\
0 & 1 & 2v_r & 3 v_r^2 & \ldots & (d-1)v_r^{d-2} & dv_r^{d-1}\\
0 & 0 & 2 & 6 v_r & \ldots & (d-1)(d-2)v_r^{d-3} & d (d-1)v_r^{d-2}\\
\vdots & \vdots & \vdots & \ddots & \vdots
\end{bmatrix}
=\prod_{1\leq i<j\leq r}(v_j-v_i)^{m_i m_j}\neq 0
\end{equation*}
Hence, the dimension of each of the spaces are equal.

Finally, if $f=(x^2+y^2)^{d/2}$ with $d$ even, then by Lemma \ref{lemma2} we know that every nonzero vector is an eigenvector and $H_f$ is the ambient space. Hence, $\langle Z\rangle=H_f$.}
\end{proof}

As a consequence we obtain the following corollary, which may be seen
as a {\it Spectral Decomposition} of any binary form $f$.

\begin{cor}\label{cor:corf}
Any binary form $f\in \mathrm{Sym}^d V$ with $\mathrm{dim} V=2$ can be written as a linear combination of the critical rank one tensors for $f$.
\end{cor}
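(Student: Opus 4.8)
The plan is to deduce Corollary \ref{cor:corf} directly from Proposition \ref{prop:main2} together with the observation that $f$ always lies in its own singular space $H_f$. First I would note that $\langle D(f),f\rangle=0$: this follows at once from the coordinate formula \eqref{eq:scalar}, since $D(f)=yf_x-xf_y$ and a short computation shows that the pairing of $f$ with $yf_x-xf_y$ telescopes to zero (equivalently, $D$ is skew with respect to $\langle\,,\,\rangle$, so $\langle D(f),f\rangle=-\langle f,D(f)\rangle$). Hence $f\in H_f$.

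Next, by Proposition \ref{prop:main2} we have $\langle Z\rangle=H_f$, where $Z$ is the scheme cut out by $D(f)$ on the rational normal curve $C_d$. Therefore $f$, belonging to $H_f=\langle Z\rangle$, is a linear combination of points of $Z$. When $D(f)$ has $d$ distinct roots, these points are exactly the critical rank $1$ tensors $v_i^d$ for $f$ (Proposition \ref{prop:eigendisc}), possibly rescaled, so $f$ is a linear combination of critical rank $1$ tensors and we are done. The two remaining cases mirror the proof of Proposition \ref{prop:main2}: if $D(f)$ has multiple roots but $f\neq(x^2+y^2)^{d/2}$, one must be slightly careful, since a point of the non-reduced scheme $Z$ need not be a rank $1$ critical tensor; here I would instead invoke part ii) of Theorem \ref{thm:main} (equivalently run the same limit argument as in Proposition \ref{prop:main2}, approximating $f$ by $f_n$ with $D(f_n)$ having distinct roots), or more simply observe that any element of $H_f$, in particular $f$ itself, is already known by Theorem \ref{thm:main}ii) to be a linear combination of the critical rank $1$ tensors. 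Finally, if $f=(x^2+y^2)^{d/2}$ with $d$ even, Lemma \ref{lemma2} shows every unit vector is an eigenvector, and writing $f=\tfrac{1}{2}\big(x^d+y^d+\cdots\big)$ — concretely $(x^2+y^2)^{d/2}$ expands as a nonnegative combination of the $v^d$ for suitable unit $v$ — exhibits the required decomposition explicitly.

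The main obstacle is the bookkeeping in the non-reduced case: one has to make sure that membership of $f$ in the linear span $\langle Z\rangle$ translates into a combination of \emph{rank one} tensors $v^d$ (not tangent vectors to $C_d$), and the cleanest route is to lean on Theorem \ref{thm:main}ii) rather than to re-examine the scheme structure of $Z$. Everything else is immediate: the identity $f\in H_f$ is one line, and the distinct-root case is just linear algebra on the Veronese curve.
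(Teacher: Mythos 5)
Your argument is correct and follows the paper's route exactly: the corollary is deduced from Proposition \ref{prop:main2} together with the observation that $f\in H_f$ (i.e.\ $\langle D(f),f\rangle=0$, by skew-symmetry of $D$ with respect to the $SO(2)$-invariant scalar product), with the special case $f=(x^2+y^2)^{d/2}$ handled by an explicit decomposition into $d$-th powers of unit vectors, for which the paper cites Reznick's formula. You are in fact slightly more careful than the paper, which presents the corollary simply ``as a consequence'' of Proposition \ref{prop:main2} and does not address the non-reduced case of $Z$ that you flag.
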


The previous statement holds even in the special case $d$ even
and $f=(x^2+y^2)^{d/2}$, since from \cite[Theorem 9.5]{Rez} there exists $c_d\in{\mathbb R}$ such that the following decomposition holds $\forall\phi\in{\mathbb R}$
$$(x^2+y^2)^{d/2}=c_d\sum_{k=0}^{d/2}\left[\cos(\frac{2k\pi}{d+2}+\phi)x+\sin(\frac{2k\pi}{d+2}+\phi)y\right]^d$$

In this decomposition the summands on the right-hand side correspond to 
$(d+2)/2$ consecutive vertices of a regular $(d+2)$-gon.

In the $d=2$ case, the Spectral Theorem asserts any binary quadratic form $f\in\mathrm{Sym}^2\R^2$
can be written as sum of its rank one critical tensors. This statement fails for $d\ge 3$, as it can be checked already on the examples $f=x^d+y^d$ for $d\ge 3$,
where only two among the $d$ rank one critical tensors are used, namely $x^d$ and $y^d$, and the coefficients of the remaining $d-2$ rank one critical tensors 
in the Spectral Decomposition of $f$ are zero.

\section{Real critical rank $2$ tensors for binary quartics}
\label{sec:lastreal}

We recall the following result by M. Maccioni.

\begin{thm}\label{thm:maccioni}(Maccioni, \cite[Theorem 1]{Mac})
Let $f$ be a binary form.
$$\# \text{ real roots of f }\leq\# \text{ real critical rank 1 tensors for\ } f$$
The inequality is sharp, moreover it is the only constraint between the number of real roots and the number of real critical rank $1$  tensors, beyond parity mod $2$.
\end{thm}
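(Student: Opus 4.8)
The plan is to pass to the trigonometric picture and run Rolle's theorem on a circle. Set $g(\theta):=f(\cos\theta,\sin\theta)$; the chain rule gives $D(f)(\cos\theta,\sin\theta)=-g'(\theta)$. By Proposition \ref{prop:eigendisc} the real critical rank $1$ tensors for $f$ correspond, counted with multiplicity, to the real roots of $D(f)$ in $\Pe^1(\R)$, hence to the zeros of $g'$ on $\R/\pi\Z$; similarly the real roots of $f$ in $\Pe^1(\R)$ are the zeros of $g$ on $\R/\pi\Z$, with the same multiplicities. The one case to put aside is $d$ even with $f$ proportional to $(x^2+y^2)^{d/2}$: there $D(f)=0$, there are infinitely many critical rank $1$ tensors by Lemma \ref{lemma2}, $f$ has no real root, and the inequality is trivially true. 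So from now on $D(f)\ne 0$, and it has degree exactly $d$.

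For the inequality, work on $\R/2\pi\Z$, where $g(\theta+\pi)=(-1)^d g(\theta)$. Suppose $f$ has $r$ real roots, with $s$ distinct ones of multiplicities $m_1,\dots,m_s$, so that $\sum_i m_i=r$; then $g$ has $2s$ distinct zeros on $\R/2\pi\Z$ of total multiplicity $2r$. At a zero of $g$ of multiplicity $m$ the derivative $g'$ vanishes to order $m-1$, which accounts for $\sum_i 2(m_i-1)=2r-2s$ zeros of $g'$; and on each of the $2s$ open arcs into which the zeros of $g$ cut the circle, Rolle's theorem produces at least one further zero of $g'$ in the interior, accounting for at least $2s$ more. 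Hence $g'$ has at least $2r$ zeros on $\R/2\pi\Z$, i.e.\ at least $r$ on $\R/\pi\Z$, which is the number of real critical rank $1$ tensors. The parity assertion is then the elementary fact that a binary form of degree $d$ has a number of real roots congruent to $d$ modulo $2$ (the non-real roots pair up into conjugates), applied both to $f$ and to $D(f)$.

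For optimality one has to realize every pair $(r,k)$ with $0\le r\le k\le d$ and $r\equiv k\equiv d\pmod 2$. The useful remark is that $D$ annihilates $x^2+y^2$, so for $f=(x^2+y^2)^{(d-k)/2}p$ one gets $D(f)=(x^2+y^2)^{(d-k)/2}D(p)$; since $x^2+y^2$ has no real root this reduces the problem to producing a binary form $p$ of degree $k$ with exactly $r$ real roots whose $D(p)$ has all $k$ roots real. The diagonal case $k=r$ is handled by $p=\ell_1\cdots\ell_r$ with distinct real $\ell_i$: then $\deg D(p)=r$, so by the inequality just proved $D(p)$ has at least, hence exactly, $r$ real roots. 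For $k>r$, translate once more: writing $h(\theta)=p(\cos\theta,\sin\theta)$ (a degree-$k$ trigonometric polynomial with frequencies $\equiv k\pmod 2$), one wants $h'$ to attain the maximal number $2k$ of zeros on $\R/2\pi\Z$ while $h$ has exactly $2r$ zeros. A natural starting point is $h=\cos(k\theta)$, i.e.\ $p=\mathrm{Re}(x+iy)^k$, for which $D(p)=k\,\mathrm{Im}(x+iy)^k$ is totally real with $k$ real roots; perturbing $h$ by lower-frequency terms of the right parity keeps the zero set of $h'$ maximal by stability, while letting the number of real zeros of $h$ decrease through the admissible even values.

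The part I expect to be genuinely delicate is precisely this last construction: exhibiting, for each admissible $(r,k)$, an explicit form or a controlled one-parameter family realizing it, and in particular verifying that the number of real zeros of $h$ can be made to run through every admissible value without ever destroying the maximality of the zero set of $h'$. The inequality and the parity constraint are soft; pinning down the exact set of realizable pairs --- and dealing with small-degree edge effects, e.g.\ $d=2$, where every quadratic not proportional to $x^2+y^2$ has exactly two real critical rank $1$ tensors --- is where the work concentrates.
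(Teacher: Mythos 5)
First, a remark on context: the paper does not prove this statement at all --- it is quoted verbatim from Maccioni \cite[Theorem 1]{Mac} --- so there is no internal proof to compare against, and your attempt has to stand on its own. The inequality and the parity constraint in your proposal are correct, and the argument is essentially the standard one (and, in substance, Maccioni's): pass to $g(\theta)=f(\cos\theta,\sin\theta)$, use $g'(\theta)=-D(f)(\cos\theta,\sin\theta)$, dispose of $f=c(x^2+y^2)^{d/2}$ separately, and combine Rolle's theorem on the $2s$ open arcs with the order-$(m_i-1)$ vanishing of $g'$ at a zero of $g$ of order $m_i$; since these two contributions occur at distinct points of $\R/2\pi\Z$, they add up to $2r$ zeros of $g'$ with multiplicity, i.e.\ $r$ real roots of $D(f)$. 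Two small points are worth recording: a real root of $D(f)$ that is also a root of $f$ has eigenvalue $\lambda=f(v)=0$, so the corresponding ``critical rank one tensor'' is the zero tensor, which Definition \ref{def:eigentensor} excludes --- so ``real critical rank $1$ tensors'' and ``real roots of $D(f)$'' are not literally synonymous in that degenerate case; and the pair $(0,0)$ is not realizable, since $g$ always attains a maximum and a minimum, so the number of real eigenvectors is at least $1$ whenever it is finite.

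The genuine gap is in the sharpness half, and you have located it yourself. The reduction $f=(x^2+y^2)^{(d-k)/2}p$, with $D(f)=(x^2+y^2)^{(d-k)/2}D(p)$, is a good move and correctly reduces the problem to: for every $r\le k$ with $r\equiv k\pmod 2$, produce a degree-$k$ form $p$ with exactly $r$ real roots such that $D(p)$ is totally real. You settle $r=k$ (hyperbolic forms, via the inequality itself), but for $r<k$ the step ``perturbing $h=\cos(k\theta)$ by lower-frequency terms keeps the zero set of $h'$ maximal by stability while the number of zeros of $h$ decreases through the admissible values'' is a hope, not a proof. Adding lower-frequency terms perturbs $h'$ as well, and the two requirements pull in opposite directions (few real zeros of $h$, all $2k$ zeros of $h'$ real); an explicit family must be exhibited and checked for each pair. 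Even the cleanest perturbation illustrates the difficulty: for $k$ even, $p=\mathrm{Re}(x+iy)^k+c\,(x^2+y^2)^{k/2}$ leaves $D(p)$ unchanged and totally real, but $h=\cos(k\theta)+c$ has either $2k$ zeros or none, so it does not sweep through the intermediate values of $r$. This interpolation is exactly where the content of \cite{Mac} lies (via the harmonic decomposition of $p$ on the circle); as it stands, your proposal establishes the inequality and the parity statement but not the realizability of all admissible pairs.
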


As a consequence, as it was first proved in \cite{ASS}, hyperbolic binary forms (i.e. with only real roots) have  all real critical rank $1$  tensors.

We attempted to extend Theorem \ref{thm:maccioni} to rank $2$ critical tensors.
Our description is not yet complete and we report about some numerical experiments in the space $\mathrm{Sym}^4\R^2$. From these experiments it seems that the constraints
about the number of real rank $2$ critical tensors are weaker than for rank $1$ critical tensors.

For quartic binary forms the computation of the critical rank $2$ tensors 
is
easier since the dual variety of the secant variety $\sigma_2(C_4)$ is given by quartics which
are squares, which make a smooth variety.

The number of complex critical  rank $2$ tensors for a general binary form of degree d
was guessed in   \cite{OSS} to be $3/2d^2-9d/2+1$. For $d=4$
this number is $7$, which can be confirmed by a symbolic computation
on a rational random binary quartic.

In conclusion, for a general binary quartic there are $4$ complex critical  rank $1$ tensors
 and $7$
complex rank $2$ critical tensors.

The following table reports some computation done for the case of binary quartic forms, by testing several different quartics. The appearance of ``yes'' in the last column means that we have found an example
of a binary quartic with the prescribed number of distinct and simple
real roots, real rank $1$ critical tensors and real critical rank $2$  tensors.
Note that we have not found any quartic with the maximum number of seven real 
rank $2$ critical tensors, we wonder if they exist.
\begin{center}
\begin{tabular}{c | c| c |c| c}
&\#\text{real roots}& \#\text{real  critical rank 1 tensors} & \#\text{real critical rank 2
 tensors} & \\
\hline
 & $0$ & $2$ & $3$ & yes\\
 & $2$ & $2$ & $3$ & yes\\
 & $0$ & $2$ & $5$ & yes\\
 & $2$ & $2$ & $5$ & yes\\
 & $0$ & $4$ & $3$ & yes\\
 & $2$ & $4$ & $3$ & yes\\
 & $4$ & $4$ & $3$ & yes\\
 & $0$ & $4$ & $5$ & yes\\
 & $2$ & $4$ & $5$ & yes\\
 & $4$ & $4$ & $5$ & ?\\  
& * & * & 7 & ?     
\end{tabular}
\end{center}

\section{Acknowledgement} Giorgio Ottaviani is member of INDAM-GNSAGA. This paper has been partially supported by the Strategic Project ``Azioni di Gruppi su variet\'a e tensori'' of the University of Florence.

\end{document}